\documentclass[12pt]{article}
\usepackage{amsmath,amssymb,amsthm}
\usepackage{enumerate}
\usepackage{graphicx}
\usepackage[T1]{fontenc}
\usepackage{caption}
\usepackage{hyperref}
\usepackage{cite}

\def\draw #1 by #2 (#3){
	\vbox to #2{
		\hrule width #1 height 0pt depth 0pt
		\vfill
		\special{picture #3} 
	}
}

\def\scaleddraw #1 by #2 (#3 scaled #4){{
		\dimen0=#1 \dimen1=#2
		\divide\dimen0 by 1000 \multiply\dimen0 by #4
		\divide\dimen1 by 1000 \multiply\dimen1 by #4
		\draw \dimen0 by \dimen1 (#3 scaled #4)}
}

\newtheorem{theorem}{Theorem}[section]
\newtheorem{example}[theorem]{Example}
\newtheorem{problem}[theorem]{Problem}
\newtheorem{defin}[theorem]{Definition}
\newtheorem{lemma}[theorem]{Lemma}

\newtheorem{remark}[theorem]{Remark}
\usepackage{xcolor}
\usepackage{float}
\usepackage{graphicx,subcaption,lipsum}
\usepackage{hyperref}
\newtheorem{nt}{Note}

\newcommand{\singlespacing}{\let\CS=\@currsize\renewcommand{\baselinestretch}{1}\tiny\CS}
\newcommand{\oneandahalfspacing}{\let\CS=\@currsize\renewcommand{\baselinestretch}{1.25}\tiny\CS}
\newcommand{\doublespacing}{\let\CS=\@currsize\renewcommand{\baselinestretch}{1.35}\tiny\CS}

\newtheorem{rule-def}[theorem]{Rule}

\begin{document}
	\baselineskip 16pt
	\newcommand{\la}{\lambda}
	\newcommand{\si}{\sigma}
	\newcommand{\ol}{1-\lambda}
	\newcommand{\be}{\begin{equation}}
		\newcommand{\ee}{\end{equation}}
	\newcommand{\bea}{\begin{eqnarray}}
		\newcommand{\eea}{\end{eqnarray}}

	\baselineskip=0.30in
		\baselineskip=0.30in
         \begin{center}   {\Large\textbf{Extremal Graphs with Prescribed Parameters for the  Spectral Radius of Weighted Adjacency Matrices with Property $P^{*}$}  }
     	\\
     	\vspace{4mm}
        \end{center}
    \begin{center}
{\large \bf   	Swathi Shetty$^{1}$, B. R. Rakshith$^{*,2}$, Sayinath Udupa N. V. $^{3}$}\\
    	 Manipal Institute of Technology\\ Manipal Academy of Higher Education, Manipal, India\\
    	swathi.dscmpl2022@learner.manipal.edu$^{1}$\\
    	rakshith.br@manipal.edu; ranmsc08@yahoo.co.in$^{*,2}$\\
    	sayinath.udupa@manipal.edu	$^{3}$.
    \end{center}
    \footnotetext{*Corresponding author}
    \begin{center}(21 September 2026)\end{center}
        \begin{abstract}
        In the literature, several graph matrices based on vertex degrees have been introduced, and the study of their spectral properties has attracted considerable attention in recent years. Motivated by these developments, in this paper, we investigate the spectral radius of the weighted adjacency matrix $A_f(G)$, where the function $f$ satisfies property $P^{*}$. More precisely, we characterize the graphs that attain the maximum spectral radius of $A_f(G)$ among graphs with a vertex cut set of prescribed order,  graphs with exactly $s$ cut edges, and graphs with given vertex connectivity and independence number. Our results further develop the unified framework for studying extremal spectral properties of degree-based weighted adjacency matrices and extend this framework to several classes of graphs with prescribed structural parameters.
        \end{abstract}
        \textbf{Mathematics subject classification:} 05C50, 05C35.\\
\textbf{Keywords:} Weighted adjacency matrix, vertex connectivity, cut edges, independence number.
        \section{Introduction}

Let $G=(V,E)$ be a simple undirected graph, where $V=V(G)$ and $E=E(G)$ are the vertex set and edge set of $G$, respectively.   For two graphs $G_1=(V_1,E_1)$ and $G_2=(V_2,E_2)$, the disjoint union of $G_{1}$ and $G_{2}$, denoted by $G_1\cup G_2$, is the graph whose vertex set is the disjoint union of the sets $V_{1}$ and $V_{2}$, and whose edge set is $E(G_1)\cup E(G_2)$. The graph $kG$, where $k$ is a positive integer,  is the  disjoint union of $k$ copies of $G$. The join graph $G_1\vee G_2$ is obtained from $G_1\cup G_2$ by adding all possible edges between vertices of $G_1$ and $G_2$. For a subset $W$ of vertices, $G-W$ is the graph obtained from $G$ deleting all those vertices that are in $W$. Let $u$ and $v$ be two non-adjacent vertices of $G$. Then  $G+uv$ denotes the graph obtained from $G$ by adding an edge between $u$ and $v$. Suppose that $G$ is connected and that  $G-W$ is disconnected or an isolated vertex. Then $W$ is called a vertex cut set if $W\subseteq V(G)$. The cardinality of a minimum vertex cut set of $G$ is called the vertex connectivity of $G$, denoted by $\kappa(G)$. The vertex independence number of $G$, denoted by $\beta_{0}(G)$, is the maximum number of pair wise non-adjacent vertices in $G$. For any subset $S$ of $V(G)$, $G[S]$ denotes the subgraph of $G$ induced by $S$. As usual, the notations $K_{n}$ and $nK_{1}$ denote the complete graph and the complement of the complete graph on n vertices, respectively.\par

Topological indices are numerical quantities derived from the molecular graphs of chemical compounds and play an important role in mathematical chemistry. In recent years, degree–based topological indices have attracted considerable attention because of their strong predictive power in QSPR and QSAR studies~\cite{hasani2025modeling,das2025study,mondal2025role,das2025exponential,shetty2026extremal}. The study of degree-based topological indices has led to the introduction of new graph matrices, and research on these matrices has intensified in recent years~\cite{rather2026geometric,br2021zagreb,mondal2026degree,estrada2017abc,rakshith2025energy}. Let $V(G)=\{v_{1},v_{2},\ldots,v_{n}\}$ and $d_{i}$ be the degree of the vertex $v_{i}$.  Motivated by  studies on graph matrices derived from degree-based topological indices,  Das et al.~\cite{das2018degree} introduced the weighted adjacency matrix (extended adjacency matrix) of the graph $G$  as
\[
A_f(G)=
\begin{cases}
f(d_i,d_j), & \text{if } v_i\,\text{ is adjacent to}\, v_j\\
0, & \text{otherwise},
\end{cases}
\]
where $f$ is a non-negative real valued function of vertex degrees $d_{i}$ and $d_{j}$, satisfying the symmetry property $f(d_{i},d_{j})= f(d_{j},d_{i})$. Studies on $A_{f}(G)$ result in uniform
approaches for degree-based graph matrices, see \cite{cruz2022extremal,hu2022graphs,li2021trees,zheng2023extremal,li2022extremal} for more details.  The spectral radius of $A_{f}(G)$ is referred as the weighted spectral radius of $G$. A function $f=f(x,y)$ is said to be increasing and convex in $x$ if $f_x(x,y)\ge0$ and $f_{xx}(x,y)\ge0$.
Li and Wang~\cite{li2021trees} proved that when $f(x,y)$ is increasing and convex in $x$, the extremal tree of order $n$ that maximizes the weighted spectral radius is either the star graph $S_n$ or the double star $S_{d,n-d}$. Zheng et al.~\cite{zheng2023extremal} studied the spectral radius of $A_{f}(G)$ for  symmetric functions $f(x,y)$ satisfying the following conditions:
\begin{enumerate}
\item[(i)] $f(x,y)>0$ is increasing and convex in $x$;
\item[(ii)] if $x_1+y_1=x_2+y_2$ and $|x_1-y_1|>|x_2-y_2|$, then $f(x_1,y_1)\ge f(x_2,y_2)$.
\end{enumerate}
A function satisfying conditions $(i)$ and $(ii)$ is said to satisfy property $P^{*}$, and the corresponding matrix $A_f(G)$ is called a weighted adjacency matrix with property $P^{*}$. They characterized extremal trees and unicyclic graphs with respect to the spectral radius of $A_f(G)$.  \par
Motivated by these results, we investigate the spectral radius of the weighted adjacency matrix $A_f(G)$, where the function $f$ satisfies property $P^{*}$. More precisely, we characterize the graphs that attain the maximum weighted spectral radius among graphs with a vertex cut set of prescribed order,  graphs with exactly $s$ cut edges, and graphs with given vertex connectivity and independence number. These results contribute to a unified framework for characterizing extremal spectral properties of degree-based weighted adjacency matrices. 
\section{Main Results}
 We denote the spectral radius of a square matrix $M$ as $\rho(M)$. Let $\Omega_1$ and $\Omega_2$ be two real matrices of same order. If every entry in  $\Omega_{1}$ is less than or equal to the corresponding entry in $\Omega_{2}$, then we write $\Omega_1\preceq \Omega_2$. The weighted spectral radius of $G$ is denoted by $\rho_{f}(G)$. The following lemmas are important to prove our main results.  
  \begin{lemma}{\rm\cite{horn2012matrix}}\label{b_1b_2}
	Let $\Omega_1$ and $\Omega_2$ be non-negative real matrices of same order $n$. If $\Omega_1\preceq \Omega_2$, then $\rho(\Omega_1)\le \rho(\Omega_2)$. Further, if $\Omega_1$ is irreducible and $\Omega_1\neq \Omega_2$, then $\rho(\Omega_1)< \rho(\Omega_2)$.
\end{lemma}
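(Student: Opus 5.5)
The weak inequality will come from the monotonicity of $x^TMx$ over nonnegative vectors together with the Perron--Frobenius theorem for nonnegative matrices. The strict inequality will use the positivity of the Perron vector of an irreducible matrix. Neither matrix is assumed symmetric, so I will not use the Rayleigh quotient. Instead I work with powers of the matrices (Gelfand's formula) and with Perron vectors.

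For the first part, I first show by induction on $k$ that $0\preceq\Omega_1^k\preceq\Omega_2^k$ for every $k\ge 1$. The inductive step is entrywise:
\[
(\Omega_1^{k+1})_{ij}=\sum_{l}(\Omega_1^k)_{il}(\Omega_1)_{lj}\le\sum_l(\Omega_2^k)_{il}(\Omega_2)_{lj}=(\Omega_2^{k+1})_{ij},
\]
and it holds because every factor is nonnegative. Next I take any monotone norm on matrices, for example the maximum absolute row sum norm $\|\cdot\|_\infty$. This gives $\|\Omega_1^k\|\le\|\Omega_2^k\|$. Taking $k$-th roots and letting $k\to\infty$, Gelfand's formula $\rho(M)=\lim_k\|M^k\|^{1/k}$ yields $\rho(\Omega_1)\le\rho(\Omega_2)$.

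For strictness, assume $\Omega_1$ is irreducible and $\Omega_1\neq\Omega_2$. Since $\Omega_2\succeq\Omega_1$ has at least the same nonzero pattern, $\Omega_2$ is also irreducible. By Perron--Frobenius, $\Omega_1^T$ (which is also irreducible) has a positive left eigenvector $y>0$ with $y^T\Omega_1=\rho(\Omega_1)y^T$. Likewise $\Omega_2$ has a positive right eigenvector $x>0$ with $\Omega_2x=\rho(\Omega_2)x$. Then
\[
\rho(\Omega_2)\,y^Tx=y^T\Omega_2x=y^T\Omega_1x+y^T(\Omega_2-\Omega_1)x=\rho(\Omega_1)\,y^Tx+y^T(\Omega_2-\Omega_1)x.
\]
The matrix $\Omega_2-\Omega_1$ is nonnegative and nonzero, and $x,y$ are strictly positive, so $y^T(\Omega_2-\Omega_1)x>0$. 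Since $y^Tx>0$, dividing by it gives $\rho(\Omega_2)>\rho(\Omega_1)$.

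The main obstacle is justifying the existence of the strictly positive Perron vectors, which is exactly where irreducibility is needed. It also requires noting that irreducibility passes from $\Omega_1$ to $\Omega_2$, and from $\Omega_1$ to its transpose, because the directed graph only gains arcs or reverses them. Since this is a standard statement from Horn and Johnson, I would cite the Perron--Frobenius theorem for irreducible nonnegative matrices for that step rather than reprove it.
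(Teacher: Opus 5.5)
Your proof is correct and complete, given Perron--Frobenius and Gelfand's formula. The paper gives no argument for this lemma; it is quoted from Horn and Johnson. So the only comparison is with the standard textbook proof.

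Your weak inequality (entrywise monotonicity of powers, a monotone norm such as the maximum row-sum norm, then Gelfand's formula) is the usual textbook argument.

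For the strict part, the textbook derivation typically goes through Wielandt's theorem or the subinvariance property of the Perron vector of the larger irreducible matrix. Your identity
$\rho(\Omega_2)\,y^Tx=\rho(\Omega_1)\,y^Tx+y^T(\Omega_2-\Omega_1)x$,
with $y>0$ a left Perron vector of $\Omega_1$ and $x>0$ a right Perron vector of $\Omega_2$, is shorter. It needs only the positivity of Perron vectors.

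Your observation that irreducibility passes from $\Omega_1$ to $\Omega_2$ (and to $\Omega_1^T$) is what makes both vectors strictly positive. It also reconciles the paper's hypothesis (irreducibility of the smaller matrix) with the more common textbook hypothesis (irreducibility of the larger one).

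In the paper's applications the matrices $A_f(G)$ are symmetric, so a Rayleigh-quotient argument would suffice there. Your version proves the lemma as stated, for arbitrary nonnegative matrices.
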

\begin{remark}\label{addedge}
	 If $f=f(x,y)$ is an increasing function of $x$, then  $A_{f}(G)\preceq A_{f}(G+e)$. Therefore, by Lemma~\ref{b_1b_2},  $\rho(A_f(G))<\rho(A_{f}(G+e))\le \rho(K_{n})$.
\end{remark}

\begin{lemma}{\rm \cite{haemers1995interlacing}}\label{equitable}
Let $G$ be a graph and $\pi=(V_1,V_2,\dots,V_r)$ be a partition of $V(G)$ with quotient matrix $Q$. Then $\lambda_1(G)\ge \rho(Q)$, with equality if the partition is equitable.
\end{lemma}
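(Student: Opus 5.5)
The statement is the classical quotient-matrix (equitable partition) bound: for a partition $\pi=(V_1,\dots,V_r)$ of $V(G)$ with quotient matrix $Q$, we have $\lambda_1(G)\ge\rho(Q)$, with equality when $\pi$ is equitable. Here $Q=(q_{ij})$ has $q_{ij}$ equal to the average number of neighbours in $V_j$ of a vertex in $V_i$. The same argument works for any symmetric non-negative matrix, such as $A_f(G)$, with $q_{ij}$ replaced by average row sums of the block $(i,j)$. The plan is to realize $Q$ through the characteristic matrix of the partition and then compare Rayleigh quotients.

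\textbf{Setting up the realization of $Q$.} Let $S$ be the $n\times r$ characteristic matrix, with $S_{v,i}=1$ if $v\in V_i$ and $0$ otherwise. Put $D=S^{T}S=\mathrm{diag}(|V_1|,\dots,|V_r|)$. Then $Q=D^{-1}S^{T}AS$, where $A$ is the adjacency matrix. The matrix $B=D^{-1/2}S^{T}ASD^{-1/2}$ is symmetric and similar to $Q$, since $B=D^{1/2}QD^{-1/2}$. Hence $\rho(Q)=\rho(B)$, and $\rho(B)=\lambda_{\max}(B)$ because $B$ is non-negative and symmetric, so Perron--Frobenius applies.

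\textbf{The inequality.} Let $U=SD^{-1/2}$. Its columns are orthonormal, so $U^{T}U=I_r$, and $B=U^{T}AU$. Take a unit vector $y$ with $B y=\lambda_{\max}(B)y$, and set $x=Uy$. Then $\|x\|=1$, and
\[
\lambda_{\max}(B)=y^{T}U^{T}AUy=x^{T}Ax\le\lambda_1(G)
\]
by the Rayleigh principle. This gives $\lambda_1(G)\ge\rho(Q)$. This is also the first step of Cauchy interlacing for the compression $U^{T}AU$.

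\textbf{Equality in the equitable case.} If $\pi$ is equitable, each vertex of $V_i$ has exactly $q_{ij}$ neighbours in $V_j$, which is equivalent to $AS=SQ$. Take a Perron eigenvector $z\ge0$, $z\ne0$, of $Q$ with $Qz=\rho(Q)z$. Then $A(Sz)=SQz=\rho(Q)Sz$. Here $Sz\ne0$ because every $V_i$ is non-empty, and $Sz$ is non-negative. So $\rho(Q)$ is an eigenvalue of $A$ with a non-negative eigenvector. By Perron--Frobenius for non-negative matrices, the eigenvalue of a non-negative eigenvector equals the spectral radius when $G$ is connected. In general, one restricts to a component where $Sz$ is non-zero, and uses $\lambda_1\ge\rho(Q)$ from the previous step to force equality.

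\textbf{Main obstacle.} The delicate step is this last one: a disconnected $G$ may have a non-negative eigenvector for a smaller eigenvalue. The cleanest route avoids the issue by combining $\rho(Q)\le\lambda_1(G)$ with the converse inequality. Take a Perron vector $x\ge0$ of $A$, so $x^{T}A=\lambda_1x^{T}$. Multiply on the right by $S$ to get $(S^{T}x)^{T}Q=\lambda_1(S^{T}x)^{T}$. If $S^{T}x\ne0$, this shows $\lambda_1$ is an eigenvalue of $Q$, so $\lambda_1\le\rho(Q)$. And $S^{T}x\ne0$ holds automatically: $x$ is non-negative and non-zero, and the entries of $S^{T}x$ are sums of its entries over the parts of a partition of $V(G)$. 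Combining the two inequalities gives $\lambda_1(G)=\rho(Q)$.
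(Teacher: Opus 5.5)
Your proof is correct and is essentially the standard argument of the cited reference; the paper gives no proof of its own, only the citation to Haemers. You realize $Q$ through the compression $U^{T}AU$ with $U=SD^{-1/2}$ having orthonormal columns to get $\rho(Q)\le\lambda_1(G)$, and use $AS=SQ$ with a non-negative Perron vector for equality. The only loose point is your intermediate ``restrict to a component'' sketch, which by itself only gives $\rho(Q)=\lambda_1$ of that particular component. The left-eigenvector argument with $S^{T}x\neq 0$ that you finish with is what actually settles the disconnected case, so you can drop the sketch.
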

Let $G$ be a connected graph with vertex set $V(G)=\{v_{1},v_{2},\ldots,v_{n}\}$. For two vertices $u$ and $v$ in $G$, define $G^{\prime}$ to be the graph obtained from $G$ by deleting every edge $v_{i}v$ $(1\le i\le n)$ such that $v_{i}\neq u$ and $v_{i}u\notin E(G)$,  and subsequently adding new edges $uv_{i}$. 
\begin{lemma}{\rm\cite{zheng2023extremal}}\label{zhengkel}
 If  a function $f$ satisfies property $P^\ast$ and $G\ncong G^{\prime}$, then $\rho(A_{f}(G))<\rho(A_{f}(G'))$.
\end{lemma}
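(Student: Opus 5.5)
The plan is to compare the two matrices through a Perron vector of $A_f(G)$ and a Rayleigh quotient. Since $G$ is connected, $A_f(G)$ is a non-negative irreducible symmetric matrix, so it has a positive unit Perron vector $x$ with $\rho(A_f(G))=x^{T}A_f(G)x$. Write $N(w)$ for the neighbourhood of $w$ in $G$. Set $S=N(v)\setminus (N(u)\cup\{u\})$; these are the vertices whose edges to $v$ are moved to $u$. Since $G\ncong G'$, we have $S\neq\emptyset$. In $G'$, vertex $u$ has degree $d_u+|S|$ and vertex $v$ has degree $d_v-|S|$; every other degree is unchanged. Put $\varepsilon=\rho(A_f(G'))-\rho(A_f(G))$. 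It suffices to show $x^{T}A_f(G')x>x^{T}A_f(G)x$, because $\rho(A_f(G'))\ge x^{T}A_f(G')x$.

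First, by the symmetry between $u$ and $v$, the key preliminary step is a choice of orientation. If $x_u<x_v$ we may swap roles: the graph obtained by moving $u$'s private neighbours to $v$ is isomorphic to $G'$, as the two hubs just exchange names. So I will assume $x_u\ge x_v$. I will also use the ordering $d_u\ge d_v$, or otherwise handle it through condition (ii), as explained below.

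Next I split the difference $x^{T}(A_f(G')-A_f(G))x$ into three groups of terms.
\begin{enumerate}[(a)]
\item For $w\in S$, the term $2x_w\bigl(f(d_u+|S|,d_w)x_u-f(d_v,d_w)x_v\bigr)$.
\item For common neighbours $w\in N(u)\cap N(v)$, the term $2x_w\bigl[(f(d_u+|S|,d_w)-f(d_u,d_w))x_u+(f(d_v-|S|,d_w)-f(d_v,d_w))x_v\bigr]$.
\item For private neighbours of $u$, and for the edge $uv$ if present, terms that change only because $u$'s degree grows. These are nonnegative because $f$ is increasing.
\end{enumerate}

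The main obstacle is group (b), together with (a) when $d_u<d_v$, since $v$'s degree drops. Here I would use convexity in $x$ with total degree preserved: $(d_u+|S|)+(d_v-|S|)=d_u+d_v$. Assuming $d_u\ge d_v$, convexity gives $f(d_u+|S|,d_w)-f(d_u,d_w)\ge f(d_v,d_w)-f(d_v-|S|,d_w)$. Combined with $x_u\ge x_v$, this makes each term in (b) nonnegative. Group (a) is positive when $d_u\ge d_v$, since $f(d_u+|S|,d_w)\ge f(d_v,d_w)$ and $x_u\ge x_v$ with $x_w>0$. The edge $uv$ itself changes from $f(d_u,d_v)$ to $f(d_u+|S|,d_v-|S|)$. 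This is nondecreasing by condition (ii), since the sum is fixed and the difference grows. Condition (ii) is exactly what handles this term.

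The remaining case is $x_u\ge x_v$ but $d_u<d_v$. There I would instead compare with the eigen-equation $\rho x_u=\sum_{w\sim u}f(d_u,d_w)x_w$ to argue that, after the move, one can reorder so that both inequalities point the same way. Failing that, I would fall back to the argument as done in \cite{zheng2023extremal}, which this lemma is cited from, since it can simply be quoted.

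Finally, strictness comes from group (a). There $x_w>0$ and $f>0$, and at least one term is strictly positive unless all the relevant equalities hold. In the equality case, the Rayleigh bound would force $x$ to also be the Perron vector of $A_f(G')$. Comparing the eigen-equations at some $w\in S$, whose neighbourhood changed from $v$ to $u$, then gives a contradiction. Hence $\rho(A_f(G))<\rho(A_f(G'))$.
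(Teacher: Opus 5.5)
Your strategy (positive Perron vector $x$ of $A_f(G)$, the orientation $x_u\ge x_v$, and a termwise Rayleigh-quotient comparison) is the right one. The orientation is justified because moving $u$'s private neighbours to $v$ gives a graph isomorphic to $G'$. The paper gives no proof of its own and only quotes \cite{zheng2023extremal}.

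\textbf{First gap: the case $d_u<d_v$.} As written, you leave the case $x_u\ge x_v$, $d_u<d_v$ unproved, and ``reordering'' or quoting the source does not close it. The missing observation is that $d_u$ versus $d_v$ is the wrong comparison. Put $P=N(u)\setminus(N(v)\cup\{v\})$ and $M=N(u)\cap N(v)$, and let $\delta=1$ or $0$ according as $uv\in E(G)$ or not. Then $d_u=|P|+|M|+\delta$ and $d_v=|S|+|M|+\delta$, so $d_v-|S|\le d_u$ and $d_u+|S|\ge d_v$ always hold.
\begin{enumerate}[(a)]
\item Group (a) needs only $d_u+|S|\ge d_v$.
\item Group (b) needs only that the increment of the convex function $t\mapsto f(t,d_w)$ over $[d_u,d_u+|S|]$ dominates its increment over $[d_v-|S|,d_v]$. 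This holds because $d_u\ge d_v-|S|$. With $x_u\ge x_v$, each term is then at least $\bigl(f(d_v,d_w)-f(d_v-|S|,d_w)\bigr)(x_u-x_v)\ge 0$.
\item For the edge $uv$, $\bigl|(d_u+|S|)-(d_v-|S|)\bigr|=|P|+|S|>\bigl||P|-|S|\bigr|=|d_u-d_v|$. This uses $P\neq\emptyset\neq S$, both forced by $G\ncong G'$ (if $P=\emptyset$, then $G'$ is $G$ with $u,v$ relabelled). So (ii) applies whichever degree is larger.
\end{enumerate}
No assumption on the degrees is needed at all.

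\textbf{Second gap: the strictness step uses the wrong vertex.} If $\rho(A_f(G'))=\rho(A_f(G))=\rho$, then indeed $A_f(G')x=\rho x=A_f(G)x$. But at $w\in S$ the two eigen-equations differ only by $f(d_u+|S|,d_w)x_u-f(d_v,d_w)x_v$. That is exactly the group-(a) term you already know vanishes. For $f\equiv 1$, which has property $P^\ast$, it just says $x_u=x_v$, which is no contradiction.

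Compare at $u$ instead. Every entry of row $u$ weakly increases: by monotonicity in $u$'s degree, and by (ii) for the $uv$ entry. Row $u$ also acquires new entries $f(d_u+|S|,d_w)>0$ for $w\in S$. Hence
$(A_f(G')x)_u-(A_f(G)x)_u\ge\sum_{w\in S}f(d_u+|S|,d_w)x_w>0$, a contradiction.

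This uses only that a maximiser of the Rayleigh quotient of a symmetric matrix is an eigenvector for its largest eigenvalue. So it also covers the case where $v$ becomes isolated in $G'$, namely $M=\emptyset$ and $uv\notin E(G)$, where ``Perron vector of $A_f(G')$'' is not available.
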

In the subsequent subsections, the weighted spectral radius refers to the spectral radius of the matrix $A_f(G)$, where the function $f$ satisfies property $P^{\ast}$.
\subsection{ {Graphs with a vertex cut set of prescribed order}}
Let $\mathcal{V}_{n}^{k}$ be the set of all connected graphs of order $n$ with a vertex cut set of order $k$ ($1\le k\le n-1$). 
\begin{figure}[H]
\centering
\includegraphics[width=0.42\textwidth]{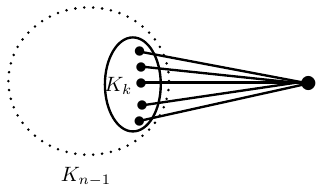}
\caption{The graph $K_{k}\vee \left(K_{n-1-k}\cup K_1\right)$.}
\label{knk}
\end{figure}
\begin{theorem}\label{at most k}
For any graph $G\in \mathcal{V}_{n}^{k}$, $$\rho_{f}(G)\le \rho_{f}(K_{k}\vee \left(K_{n-1-k}\cup K_1\right)),$$ where the equality holds if and only if $G\cong K_{k}\vee \left(K_{n-1-k}\cup K_1\right)$.
\end{theorem}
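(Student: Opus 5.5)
Take $G\in\mathcal{V}_n^k$ with maximum weighted spectral radius, and let $W$ be a vertex cut set of $G$ with $|W|=k$. We show that $G\cong K_k\vee(K_{n-1-k}\cup K_1)$; the inequality and the uniqueness then follow together.

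First we fix the coarse structure. If $G-W$ were a single isolated vertex, then $n=k+1$. A connected graph on $k+1$ vertices with a cut set of size $k$ must then be noncomplete, since removing $k$ vertices of $K_{k+1}$ leaves one vertex. We set that degenerate situation aside and assume $G-W$ is disconnected, with components $G_1,\dots,G_t$, $t\ge2$. Adding any edge inside $W$, inside some $G_i$, or between $W$ and some $G_i$ keeps $W$ a vertex cut set, so the new graph stays in $\mathcal{V}_n^k$. By Remark~\ref{addedge} each such addition strictly increases $\rho_f$. Maximality therefore forces $G[W]$ to be complete, each $G_i$ to be complete, and every vertex of $W$ to be adjacent to every vertex of $G-W$. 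Likewise, if $t\ge3$, we can join all vertices of $G_2$ to all vertices of $G_3$. Then $W$ still separates $G_1$ from the rest, so the new graph is in $\mathcal{V}_n^k$ with strictly larger $\rho_f$. Hence $t=2$ and
\[
G\cong K_k\vee\left(K_{n_1}\cup K_{n_2}\right),\qquad n_1+n_2=n-k,\quad n_1\ge n_2\ge1 .
\]

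The main step is to show that $n_2=1$, and this is where Lemma~\ref{zhengkel} is used. Suppose $n_2\ge2$. Pick $u\in V(K_{n_1})$ and $v\in V(K_{n_2})$. The neighbours of $v$ not adjacent to $u$ (other than $u$ itself) are exactly the $n_2-1\ge1$ other vertices of $K_{n_2}$; every vertex of $W$ is already adjacent to $u$. The operation $G\mapsto G'$ moves these edges from $v$ to $u$. In $G'$, the vertex $v$ is adjacent only to $W$, so $G'-W$ has the components $\{v\}$ and the subgraph induced on $V(K_{n_1})\cup V(K_{n_2})\setminus\{v\}$. The latter is connected through $u$, so $W$ is still a vertex cut set of order $k$ and $G'\in\mathcal{V}_n^k$. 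Also $G'\ncong G$, because $G'$ has a vertex of degree $k$ while every vertex of $G$ has degree at least $k+n_2-1>k$. Lemma~\ref{zhengkel} then gives $\rho_f(G)<\rho_f(G')$, contradicting maximality. So $n_2=1$, and $G\cong K_k\vee(K_{n-1-k}\cup K_1)$.

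Equality is handled as follows. Every graph in $\mathcal{V}_n^k$ can be turned into the extremal graph by the edge additions and the single switching step above, and each step increases $\rho_f$ strictly. Hence any graph attaining the maximum is isomorphic to $K_k\vee(K_{n-1-k}\cup K_1)$, and this graph does belong to $\mathcal{V}_n^k$.

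The main obstacle is the step proving $n_2=1$: one must check that the switching of Lemma~\ref{zhengkel} keeps the graph inside $\mathcal{V}_n^k$ and really changes the isomorphism class. A second point needing care is the degenerate case $n=k+1$, where one should confirm that $K_k\vee(K_0\cup K_1)$ is read consistently with the paper's definition of a vertex cut set.
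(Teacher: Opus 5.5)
Your argument for $k\le n-2$ is essentially the paper's proof. Edge additions via Remark~\ref{addedge} force $G\cong K_k\vee(K_{n_1}\cup K_{n_2})$. The switching of Lemma~\ref{zhengkel}, with the new degree-$k$ vertex certifying $G'\ncong G$, then forces $n_2=1$. You even check that $G'\in\mathcal{V}_n^k$, which the paper leaves implicit.

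The one genuine gap is the case $k=n-1$. You set it aside rather than prove it, and your stated reason is wrong. Under the paper's definition, $W$ is a vertex cut set also when $G-W$ is a single isolated vertex. So every connected graph of order $n$ lies in $\mathcal{V}_n^{n-1}$, and in particular $K_n$ does. This contradicts your claim that such a graph ``must be noncomplete.'' Since $K_{n-1}\vee(K_0\cup K_1)=K_n$, your remark would in fact exclude the extremal graph from the class. Under the stricter reading (requiring $G-W$ to be disconnected), the class would instead be empty, so the sentence is wrong either way.

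The fix takes one line, and it is how the paper opens its proof. For $k=n-1$, every edge addition to a connected graph stays in $\mathcal{V}_n^{n-1}$. So Remark~\ref{addedge} gives $\rho_f(G)<\rho_f(K_n)$ for every non-complete connected $G$ of order $n$, which yields both the inequality and the equality case. For $k\le n-2$ the isolated-vertex alternative cannot occur, since $|V(G)\setminus W|\ge 2$. The rest of your proof therefore needs no change.
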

\begin{proof}
If $k=n-1$, then $\mathcal{V}_{n}^{k}$ is the set of all connected graphs of order $n$. By Remark \ref{addedge}, $\rho(G)\le \rho(K_{n})$ for any graph $G\in \mathcal{V}_{n}^{k}$. Thus the result holds for $k=n-1$. Otherwise, $k\le n-2$. Let $H$ be a graph in $\mathcal{V}_{n}^{k}$ with maximum weighted spectral radius, and let $W=\{u_{1},u_{2},\ldots,u_{k}\}$ be a vertex cut set of $H$ with $k$ vertices. Then the subgraph  $H\backslash W$, obtained by  removing all the vertices in $W$ has at least two components.\\[2mm]  	
\textbf{Claim 1.} $H\backslash W$ has exactly two components.\\
On contrary, suppose $H\backslash W$ has at least three components, say $H_1, H_2$ and $H_3$. Choose vertices $u\in H_1$ and $v\in H_2$, and add the edge $uv$ in $H$. Clearly, $W$ is a cut set of $H+uv$ with $k$ vertices, and hence $H+uv \in \mathcal{V}_{n}^{k}$. Moreover, by Remark \ref{addedge}, $\rho_f(H)<\rho_f(H+uv)$, which contradicts the assumption that $H$ has maximum weighted spectral radius in $\mathcal{V}_{n}^{k}$. Hence, $H\backslash W$ has exactly two components, say $H_1$ and $H_2$.\\[2mm]
\textbf{Claim 2.} The subgraph induced by the vertex sets $V(H_1)\cup W$  and $V(H_2)\cup W$ both are complete.\\
Let $i=1,2$. Suppose the vertex set $V(H_i)\cup W$ has a pair of non-adjacent vertices, say $u$ and $v$. Then the graph $H+uv\in \mathcal{V}_{n}^{k}$  as $W$ is also its vertex cut set. By Remark~\ref{addedge},  $\rho_f(H)<\rho_f(H+uv)$, a contradiction. Thus our claim is true.\\[2mm] By Claim 2, both $H_{1}$ and $H_{2}$ are cliques of order, say $n_{1}$ and $n_{2}$. That is, $H_{1}\cong K_{n_{1}}$ and $H_{2}\cong K_{n_{2}}$.\\[2mm]
 \textbf{Claim 3.} $n_1=1$ or $n_2=1$.\\
 Assume that $n_1>1$ and $n_2>1$. Then $deg_{H}(v)> |W|=k$ for any vertex $v$ of $H$.  Let $u\in H_{1}\, (i.e., K_{n_1})$ and $w\in H_{2}\, (i.e., K_{n_2})$. Consider the graph $H'$  obtained from $H$ by removing all edges incident with $u$ in $H_{1}$ and attaching those edges to the vertex $w$. Note that  $deg_{H^{\prime}}(u)=|W|=k$,  and thus $H\ncong H^{\prime}$. Therefore, by Lemma~\ref{zhengkel}, 
 $\rho_f(H)<\rho_{f}(H')$, which is a contradiction.\\[2mm]
 Hence $H\cong K_{k}\vee \left(K_{n-1-k}\cup K_1\right)$. This completes the proof.
\end{proof}
\subsection{Graphs with exactly $s$ cut edges}
We denote by $\mathcal{G}_{n}^{s}$ the set of all connected graphs of order $n$ with exactly $s$ cut edges.  \\[2mm]
Let $K_{1,s}$ be the star graph of order $s+1$ with vertex set $V(K_{1,s})=\left\{v_0,v_1,\dots,v_{s}\right\}$, where $v_0$ is its center. Let $K(n_0,n_1,n_2,\dots,n_{s})$ be the graph obtained by  $K_{1,s}$ by identifying each vertex $v_i$, $i=0,1,\dots,s$, with a vertex   of the clique $K_{n_i} (n_i\ge 1)$, see Fig.~\ref{kn0n1g}. Note that  $K\left(n-s,\underbrace{1,\dots,1}_{s}\right)=K_1\vee \left(K_{n-s-1}\cup s K_1\right)$. 
\begin{figure}[H]
\centering
\includegraphics[width=0.35\textwidth]{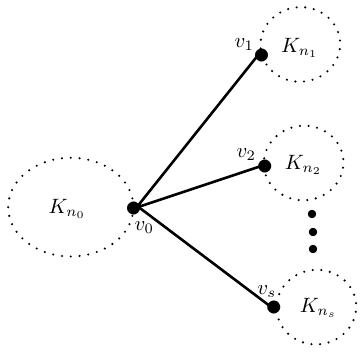}
\caption{The graph $K(n_0,n_1,n_2,\dots,n_{s})$.}
\label{kn0n1g}
\end{figure}
Let  $\mathcal{K}_{n}^{s}=\left\{K(n_0,n_1,n_2,\dots,n_{s}):n_i\ge 1,(0\le i\le s),\sum\limits_{i=0}^{s} n_i=n\right\}$.
\begin{theorem}
For any graph $G\in \mathcal{G}_{n}^{s}$, $$\rho_{f}(G)\le \rho_{f}\left(K_1\vee \left(K_{n-s-1}\cup s K_1\right)\right),$$ where the equality holds if and only if $G\cong K_1\vee \left(K_{n-s-1}\cup s K_1\right)$. 
\end{theorem}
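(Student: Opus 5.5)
The plan is to take a graph $H\in\mathcal{G}_n^s$ with maximum weighted spectral radius, first show that $H\in\mathcal{K}_n^s$, and then use Lemma~\ref{zhengkel} inside $\mathcal{K}_n^s$ to force $n_1=\dots=n_s=1$. The case $s=0$ is immediate: $\mathcal{G}_n^0$ contains $K_n=K_1\vee K_{n-1}$, and Remark~\ref{addedge} gives the result. So assume $s\ge1$ and let $e_1,\dots,e_s$ be the cut edges of $H$. Deleting them leaves $s+1$ components $C_0,\dots,C_s$ with no cut edges, each $2$-edge-connected or a single vertex. Contracting every $C_i$ gives a tree $T$ on $s+1$ nodes whose edges are the $e_j$.

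\textbf{Step 1 (each block complete).} If some $C_i$ has two non-adjacent vertices $u,v$, then $H+uv$ has the same cut edges: adding an edge inside a $2$-edge-connected piece neither creates nor destroys a bridge. So $H+uv\in\mathcal{G}_n^s$, and Remark~\ref{addedge} gives a contradiction. Hence $C_i\cong K_{n_i}$ for every $i$.

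\textbf{Step 2 (tree is a star, and cut edges share one endpoint in a single clique).} First suppose two cut edges $e=xy$ and $e'=x'y'$ attach to the same clique $C_i$ at different vertices $x\ne x'$. Apply the operation of Lemma~\ref{zhengkel} with $u=x$, $v=x'$. Every neighbour of $x'$ that is not already a neighbour of $x$ (in particular $y'$) is moved onto $x$. Cut edges remain cut edges and no new ones arise, so $H'\in\mathcal{G}_n^s$ and $H'\ncong H$ because $\deg x$ strictly increases. This contradicts maximality. Hence all cut edges meeting a given clique meet it at one vertex.

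Next suppose $T$ is not a star. Then $T$ contains a path of length three, and I would again use the shifting operation. Take two adjacent cut vertices $a,b$ (the endpoints of a cut edge $ab$) that both have further neighbours, and apply Lemma~\ref{zhengkel} with $u=a$, $v=b$. All of $b$'s other neighbours move to $a$, and $ab$ stays a bridge to the now pendant vertex $b$. I need to check that the bridge count is preserved. The edges from $b$ into its clique $K_{n_j}$ become edges from $a$ into that clique. If $n_j\ge2$, the clique $C_j$ together with $a$ becomes complete and bridgeless, and $b$ is detached, so the bridge count may shift. This is the main obstacle: the move can merge blocks and change $s$. I expect to handle it by a careful case choice, for instance applying the operation to $v$ a leaf-side vertex of degree one in a path of pendant cliques, or first using Step~1 style edge additions to repair the count. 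Once all the bookkeeping is done, $H$ is of the form $K(n_0,\dots,n_s)\in\mathcal{K}_n^s$.

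\textbf{Step 3 (inside $\mathcal{K}_n^s$).} If some $n_i\ge2$ with $i\ge1$, let $v_i$ be the attachment vertex of $K_{n_i}$ and pick $w\in K_{n_i}\setminus\{v_i\}$. Apply Lemma~\ref{zhengkel} with $u=v_0$ and $v=v_i$. The vertex $v_i$ loses all its clique neighbours, which are reattached to $v_0$. The result is $K(n_0+n_i-1,\dots,1,\dots)$, which still has $s$ cut edges, namely $v_0v_i$ (now pendant) and the others unchanged. It is not isomorphic to $H$, since degrees change. Therefore $n_1=\dots=n_s=1$ and $H\cong K_1\vee(K_{n-s-1}\cup sK_1)$. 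Strictness in each contradiction yields the equality case.
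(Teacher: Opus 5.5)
Your route is the paper's. After Remark~\ref{addedge} makes the bridgeless pieces cliques, the paper's Claims~1, 2 and 3 are exactly your first half of Step~2, second half of Step~2, and Step~3, each a single application of Lemma~\ref{zhengkel}.

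The gap is that you never finish the second half of Step~2. You flag a ``main obstacle'' and defer to an unspecified case choice, so as written $H\in\mathcal{K}_n^s$ is not established. The obstacle is not real, and the move you propose works as stated. (The paper's Claim~2 makes the same move and simply asserts $H'\in\mathcal{G}_n^s$.) Here is the check:
\begin{itemize}
\item If $T$ is not a star, take a path $C_pC_iC_jC_q$ of length three in $T$. By your first half, the cut edges at $C_i$ all meet it at one vertex $a$, and those at $C_j$ all meet it at one vertex $b$. So $ab$ is a cut edge, and both $a$ and $b$ carry another cut edge.
\item No neighbour of $b$ other than $a$ is adjacent to $a$, since otherwise $ab$ would lie on a triangle. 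Hence the operation replaces every edge $bz$, $z\ne a$, by $az$.
\item In $H'$, consider the $s+1$ vertex sets $\{b\}$, $V(C_i)\cup(V(C_j)\setminus\{b\})$, and the remaining $V(C_k)$. The middle set induces $C_i$ and a copy of $K_{n_j}$ (namely $a$ together with $C_j-b$) glued at $a$. Since $n_j\ne 2$ (a bridgeless piece cannot have exactly two vertices), every edge there lies on a triangle, so this set is bridgeless. The other sets are unchanged.
\item Exactly $s$ edges run between these sets: $ab$, the edges $ay$ replacing the cut edges $by$, and the untouched cut edges. Since $H'$ is connected, the quotient is a tree, so these are precisely the cut edges of $H'$.
\end{itemize}
So gluing $C_i$ to the new clique loses one piece while detaching $b$ gains one, and $s$ does not change.

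You also need $H'\ncong H$ in this step. Only $a$ and $b$ change degree, neither was pendant in $H$, and $b$ is pendant in $H'$, so $H'$ has one more pendant vertex. Your Step~3 is the identical operation with $a=v_0$ and $b=v_i$, and there you accepted that $s$ is preserved; the check above covers both steps.

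Two smaller corrections:
\begin{itemize}
\item In Step~3, the graph you obtain is not $K(n_0+n_i-1,\dots)$. The vertices of $K_{n_0}-v_0$ and $K_{n_i}-v_i$ are not joined, so $H'$ is $K_{n_0}$ and a new $K_{n_i}$ sharing $v_0$, with $v_i$ pendant. This is harmless, because $H'\in\mathcal{G}_n^s$ by the count above and it has one more pendant vertex than $H$.
\item In the first half of Step~2, ``$\deg x$ strictly increases'' does not by itself exclude an isomorphism. Use instead that the number of vertices of degree $n_i-1$ rises by exactly one: $x'$ drops to degree $n_i-1$, while both $x$ and $x'$ exceeded it in $H$.
\end{itemize}
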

\begin{proof}
For $s=0$,  $G\cong K_n$, and thus the result follows. Suppose $s\ge 1$.
Let $H$ be a graph in $\mathcal{G}_{n}^{s}$ with  maximum weighted spectral radius. Denote by  $H^\ast$ the graph obtained from $H$ by deleting $s$ cut edges. Then $H^{\ast}$ has $s+1$ components, each of which is either an isolated vertex or has order at least 3. Moreover, by Remark \ref{addedge}, each of these $s+1$ components must be a complete graph. Let $K_{n_{0}}, K_{n_{1}},\ldots,K_{n_{s}}$ be the components of $H^{\ast}$, where $\sum_{i=0}^{s}n_{i}=n$. For each $i$, define
$V_{n_i}=\{v\in V(K_{n_i}) : v \text{ is incident with a cut edge of } G\}$.\\[2mm]
\textbf{Claim 1.} $|V_{n_i}|=1$ for $0\le i\le s$.\\
Suppose $V_{n_{i}}$ has at least two vertices, say  $u$ and $w$. Construct  $H^{\prime}$ from $H$ by deleting every cut edge incident with $u$ and reattaching each such edge to $w$, with its other endpoint unchanged. Then $H^\prime\in \mathcal{G}_{n}^{s}$ and also, $H\not\cong H^\prime$ as the number of vertices of degree $n_i-1$ in $H$ is one less than that of $H^{\prime}$. Therefore, by Lemma~\ref{zhengkel},  $\rho_f(H^\prime)>\rho_f(H)$, a contradiction. Hence, $|V_{n_i}|=1$.\\[2mm]
From Claim 1, it follows that the graph $H$ must be isomorophic to a graph obtained from a tree $T$ with $V(T)=\{v_{0},v_{1},\ldots,v_{s}\}$ by identifying each vertex $v_{i}$ ($i = 0,1,2,\ldots,s$) of $T$ with a vertex of the clique $K_{n_{i}}$. For example, see Fig.~\ref{exg}.        
\begin{figure}[H]
\centering
\includegraphics[width=0.8\textwidth]{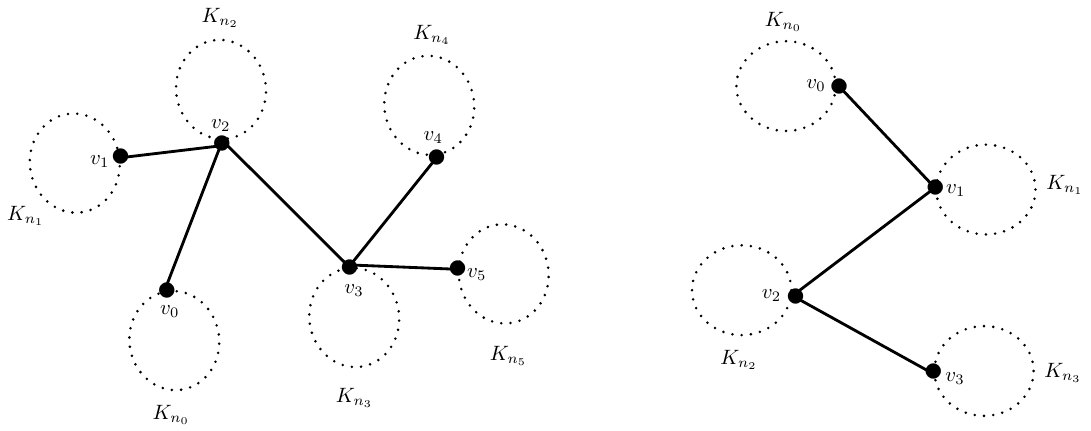}
\caption{ Examples of graphs $H$ arising from Claim 1. }
\label{exg}
\end{figure}
\noindent
\textbf{Claim 2.} $H\in \mathcal{K}_{n}^{s}$.\\
If $H\notin \mathcal{K}_{n}^{s}$, then there exist vertices $v\in V(K_{n_i})$ and $u\in V(K_{n_j})$ with $i\ne j$ such that $uv$ is a cut edge and each of $u$ and $v$ is incident with at least one other cut edge. Construct $H^{\prime}$ from $H$ by deleting every edge incident with $v$ except edge $uv$, and then reattaching each such edge to $u$, with its other endpoint unchanged. Then $H^\prime\in \mathcal{G}_{n}^{s}$. Moreover $H^{\prime}\ncong H$, since $H^{\prime}$ has one more pendant vertex than $H$.  By Lemma~\ref{zhengkel}, $\rho_f(H^\prime)>\rho_f(H)$, a contradiction. Hence, $H\in \mathcal{K}_{n}^{s}$.\\[2mm]
\textbf{Claim 3}. $H\cong K\left(n-s,\underbrace{1,\dots,1}_{s}\right)$.\\
By Claim 2, without loss of generality, we may assume that $H\cong K(n_{0},n_{1},n_{2},\ldots,n_{s})$. Suppose, to the contrary, that $n_{i}\ge 3$, for some $1\le i\le s$. Consider the graph $H^{\prime}$ obtained by deleting every edge incident with $v_{i}$ except the edge $v_{0}v_{i}$, and reattaching each such edge to $v_{0}$. Then  $H^{\prime}\in \mathcal{G}_{n}^{s}$. Since $H^{\prime}$ has one more pendant vertex than $H$, $H^{\prime}\ncong H$. Thus, by Lemma~\ref{zhengkel}, $\rho_f(H^\prime)>\rho_f(H)$, a contradiction. Therefore, $H\cong K\left(n_{0},\underbrace{1,\dots,1}_{s}\right)$, where $n_{0}=n-s$. That is,  $H\cong K_1\vee (K_{n-s-1}\cup sK_1)$. This completes the proof of the theorem.
\end{proof}

\subsection{Graphs with given vertex connectivity and independence number}
We denote by $\mathcal{H}_{n,\kappa}^{\beta_{0}}$  the set of graphs of order $n$ with connectivity $\kappa$ and independence number $\beta_0$.
\begin{figure}[H]
\centering
\includegraphics[width=0.48\textwidth]{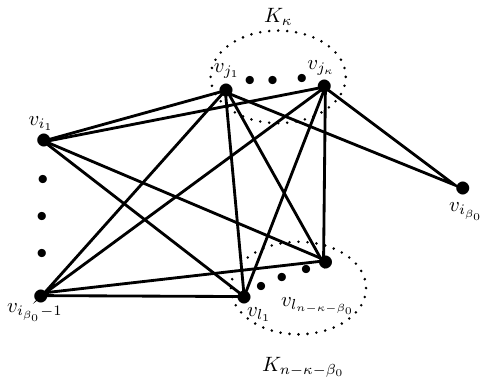}
\caption{The graph $K_{\kappa}\vee \left(K_1\cup \left(K_{n-\kappa-\beta_0}\vee(\beta_0-1)K_1\right)\right)$}
\label{Hnk}
\end{figure}
\begin{theorem}
For any graph $G\in \mathcal{H}_{n,\kappa}^{\beta_{0}}$,  $$\rho_{f}(G)\le \rho_{f}\left(K_{\kappa}\vee\left(K_1\cup \left(K_{n-\kappa-\beta_0}\vee (\beta_0-1)K_1\right)\right)\right),$$ where the equality holds if and only if $G\cong K_{\kappa}\vee\left(K_1\cup \left(K_{n-\kappa-\beta_0}\vee (\beta_0-1)K_1\right)\right)$, see Fig~\ref{Hnk}.
\end{theorem}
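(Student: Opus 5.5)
We take $H$ to be a graph in $\mathcal{H}_{n,\kappa}^{\beta_0}$ with maximum weighted spectral radius, and we fix a minimum vertex cut set $W$ with $|W|=\kappa$. The aim is to reduce $H$ to a canonical shape step by step, using Remark~\ref{addedge} and Lemma~\ref{zhengkel}. Each modification must keep both the connectivity $\kappa$ and the independence number $\beta_0$ unchanged. The trivial cases are handled first. If $\beta_0=1$ then $H\cong K_n$, and the formula still makes sense formally. If $\kappa=n-\beta_0$, the graph $H$ is forced to be $K_\kappa\vee\beta_0K_1$, and the extremal graph specializes to exactly this.

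\textbf{Step 1: normalization by edge addition.} As in Claim 1 of Theorem~\ref{at most k}, $H\backslash W$ has exactly two components $H_1,H_2$. If there were more, adding an edge between two of them keeps $W$ a cut set, so connectivity stays $\kappa$ (it cannot increase above $|W|$ and cannot decrease by adding edges). The independence number either stays the same or drops by one; we must rule out the drop. To do this, fix a maximum independent set $S$ of $H$ and add only edges that do not join two vertices of $S$. Adding every such edge that preserves $W$ as a cut set gives the following structure:
\begin{itemize}
\item $W$ is a clique joined to everything;
\item each of $H_1,H_2$ is complete outside $S$;
\item every vertex of $S\cap V(H_i)$ is adjacent to all non-$S$ vertices of $H_i$.
\end{itemize}
Since $W\subseteq$ the non-$S$ part (else we would have to treat $W\cap S$), we must argue separately that $S\cap W=\emptyset$ can be assumed. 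A vertex of $W$ lying in $S$ could be swapped, or else we observe that $S\cap W\neq\emptyset$ forces $S$ to avoid both $H_1$ and $H_2$ almost entirely, which is suboptimal; this needs a small case check. The outcome of Step 1 is that $H\cong K_\kappa\vee\big((K_{a_1}\vee b_1K_1)\cup(K_{a_2}\vee b_2K_1)\big)$ with $b_1+b_2=\beta_0$, $a_i+b_i=n_i\ge1$, and $a_1+a_2=n-\kappa-\beta_0$. We must also check that each edge addition used is strict, via Lemma~\ref{b_1b_2}; this holds because $H$ is connected, so $A_f(H)$ is irreducible.

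\textbf{Step 2: transfer to make one side a single vertex.} Next we show that one side, say $H_1$, is $K_1$, i.e.\ $(a_1,b_1)=(0,1)$. Suppose $n_1,n_2\ge2$. Pick a vertex $u\in V(H_1)$. If $b_1\ge1$ we take $u\in S$; then the side stays nonempty and the independent vertex count is conserved. We apply the shifting operation $G\mapsto G'$ of Lemma~\ref{zhengkel} with a suitable vertex $w\in V(H_2)$, chosen of maximum degree (a clique vertex, or one in $W$). This moves $u$'s neighbours in $H_1$ to $w$, so $u$ becomes adjacent only to $W$ and the remaining vertices of $H_1$ attach to $H_2$. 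Iterating this, or performing it in one step by choosing the partition appropriately, produces $K_\kappa\vee(K_1\cup(K_{n-\kappa-\beta_0}\vee(\beta_0-1)K_1))$. Lemma~\ref{zhengkel} then gives a strict increase whenever $H\ncong H'$.

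\textbf{Main obstacle.} The delicate point is verifying that each intermediate graph in Step 2 still has connectivity exactly $\kappa$ and independence number exactly $\beta_0$. The shift can merge independent vertices of $H_1$ into neighbourhoods in $H_2$ and so create larger independent sets. I would handle this by moving whole blocks explicitly rather than one vertex at a time. Concretely, I would define $H'$ directly as the target-shaped graph obtained by relocating all of $V(H_1)\setminus\{u\}$ into $H_2$, with the $K_{a_1}$ vertices absorbed into the clique and the $b_1$ independent vertices absorbed into the independent part. I would then realize this relocation as a sequence of Lemma~\ref{zhengkel} moves (each with $u$ as the receiving vertex of a single transfer), and check the two parameters at each stage directly from the explicit join structure.
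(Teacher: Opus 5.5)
Your plan follows the paper's: saturate using only edges that avoid $S$--$S$ pairs, reduce to two components, then apply one shift of Lemma~\ref{zhengkel} so that a vertex of $S$ is adjacent only to $W$. Your choice of giver $u\in S\cap V(H_1)$ and non-$S$ receiver $w\in V(H_2)$ is sound; a receiver in $W$ would make the shift vacuous, since $W$ is universal.

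\textbf{Step~2 fails as written.} You claim the shift preserves $\beta_0$ when $b_1\ge 1$. This is false for $b_1=1$: in $H'$ the vertex $u$, any vertex of $K_{a_1}$, and the $b_2=\beta_0-1$ independent vertices of $H_2$ are pairwise non-adjacent, so $\beta_0(H')=\beta_0+1$. This already happens for $H=K_\kappa\vee(K_{n_1}\cup K_{n_2})$.

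Your remedy cannot work either. Every shift of Lemma~\ref{zhengkel} preserves the number of edges. However, $T=K_\kappa\vee(K_1\cup(K_{n-\kappa-\beta_0}\vee(\beta_0-1)K_1))$ has $a_1a_2+a_1(b_2-1)+a_2(b_1-1)\ge a_1a_2\ge 1$ more edges than $H$ when $n_1,n_2\ge 2$. So no sequence of shifts turns $H$ into $T$.

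\textbf{The missing idea} is that the shifted graph need not lie in $\mathcal{H}_{n,\kappa}^{\beta_0}$. It only has to be a spanning subgraph of a member of the class. Your single shift already gives a spanning subgraph of $T$. Take $u$ as the $K_1$, $K_{a_1}\cup K_{a_2}$ as the clique $K_{n-\kappa-\beta_0}$, and $S\setminus\{u\}$ as $(\beta_0-1)K_1$. The only new edges, from $w$ to $K_{a_1}$, are clique--clique. Every vertex of $H$ has degree at least $\kappa+\min(a_1,a_2)>\kappa=\deg_{H'}(u)$, so $H\ncong H'$. Lemmas~\ref{zhengkel} and~\ref{b_1b_2} then give $\rho_f(H)<\rho_f(H')\le\rho_f(T)$, a contradiction.

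The paper's step forcing $|V(H_2)|=1$ has exactly the defect you feared. It asserts $H'\in\mathcal{H}_{n,\kappa}^{\beta_0}$, but its moved vertex $v\notin S$ ends up adjacent only to $C$, so $S\cup\{v\}$ is independent whenever $C\cap S=\emptyset$.

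\textbf{Step~1 presupposes $W\cap S=\emptyset$.} A vertex of $W\cap S$ can never be joined to the rest of $S$, so your normal form needs this. ``Could be swapped \ldots\ small case check'' is not an argument, and the paper needs a separate claim at this point. The same comparison settles it.

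First, a general criterion. Suppose $S$ is an independent $\beta_0$-set and some $z\in S$ has at most $\kappa$ neighbours. Since $\kappa\le n-\beta_0$, $N(z)$ lies in a $\kappa$-set $K\subseteq V\setminus S$. Then the graph is a spanning subgraph of $T$ with parts $K$, $\{z\}$, $V\setminus(S\cup K)$ and $S\setminus\{z\}$: every edge meets $V\setminus S$, and none joins $z$ to the clique part.

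Now assume $W\cap S\neq\emptyset$.
\begin{enumerate}
\item If some $b\in S$ lies outside $W$, shift $b$'s neighbours onto a vertex of another component of $H-W$. This leaves $N(b)\subseteq W\setminus S$ and keeps $S$ independent.
\item If $S\subseteq W$, shift some $x\in S$ onto a neighbour $w_1$ of $x$ in one component, then onto a vertex of another component. This leaves $N(x)\subseteq(W\setminus S)\cup\{w_1\}$.
\end{enumerate}
In both cases the resulting graph lies below $T$ and has a vertex of degree less than $\kappa$, so it is not isomorphic to $H$, and you again contradict maximality.

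Also, $\kappa=n-\beta_0$ does not force $H\cong K_\kappa\vee\beta_0K_1$; $C_4$ is a counterexample. Only the maximizer must have this form.
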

\begin{proof}
Let $H$ be a graph with vertex set $V(H)=\{v_1,v_2,\ldots ,v_n\}$ that attains the maximum weighted spectral radius among all graphs in $\mathcal{H}_{n,\kappa}^{\beta_0}$. 
Let $S=\{v_{i_1},\dots ,v_{i_{\beta_0}}\}$ be a maximum vertex independent set of $H$ and let $C=\{v_{j_1},\dots ,v_{j_\kappa}\}$ be a minimum vertex cut set of $H$, then the induced subgraph $H[V(G)\backslash C]$, denoted by $H^{\ast}$, is either disconnected graph or an isolated vertex. Let $H_1$, $H_2, \ldots, H_q$ be the components of $H^{\ast}$ with  
$|V(H_1)|\ge |V(H_2)|\ge \ldots \ge |V(H_q)|$. Note that, if $q=1$, then $H\cong K_n$, so we are done. Otherwise, $q\ge 2$.  Suppose $|C\cap S|=\alpha$, $|C\backslash S|=\gamma$, $|V(H_i)\cap S|=s_i$ and $|V(H_i)|\backslash S|=r_i$. \\[2mm]
\textbf{Claim 1}:  $H[V(H_i)\cup C]\cong(s_i+\alpha)K_1\vee K_{r_i+\gamma}$ for $i=1,2,\ldots,q$. \\[2mm]
Let $u$ and $v$ be two vertices of $H[V(H_i)\cup C]$, where at least one  of $u$ and $v$ does not belong to  $C$. Suppose that $uv\notin E(H)$. Then $H+uv\in \mathcal{H}_{n,\kappa}^{\beta_0}$. By Remark~\ref{addedge},  $\rho_f(H+uv)>\rho_f(H)$, which contradicts the hypothesis that $H$ has maximum weighted spectral radius. Thus, $uv\in E(H)$, and hence,  $H[V(H_i)\cup C]\cong(s_i+\alpha)K_1\vee K_{r_i+\gamma}$.\\[2mm]
Suppose $V(H)=C\cup S$. Then each  component $H_{i}$ is an isolated vertex. Furthermore, $C\cap S=\emptyset$, because if $C\cap S\neq\emptyset$, then $C\backslash S$ would form a cut set of $H$, contradicting the fact that the vertex connectivity of $H$ is $\kappa$. Thus, $H\cong K_{\kappa}\vee \beta_{0}K_{1}$.
\begin{figure}[H]
\centering
\includegraphics[width=0.43\textwidth]{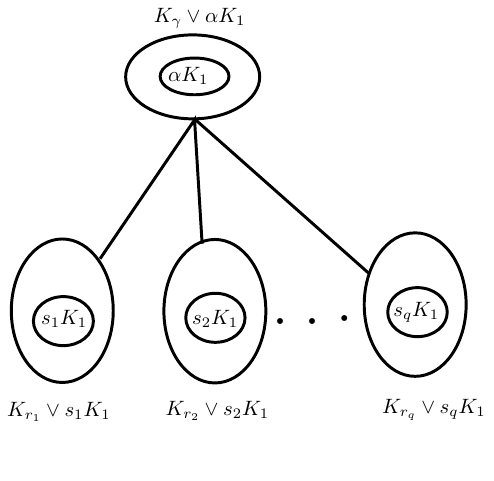}
\caption{Structure of graph $H$}
\label{fig511}
\end{figure}
\noindent
From Claim 1, it follows that the structure of the graph $H$ must be as depicted in Fig. \ref{fig511}. Let  $V(H)\neq C\cup S$. Then $H_{1}$ contains a vertex, say $w$, that belongs to neither $C$ nor $S$. We now claim that $q=2$. Suppose $q\ge 3$. Then for a vertex  $u$ in $H_{2}$, we have $H+uw \in \mathcal{H}_{n,\kappa}^{\beta_0}$. By Remark \ref{addedge}, it follows that  $\rho_{f}(H+uw)>\rho_{f}(H)$, a contradiction. Hence, $q=2$. Now, suppose  that  $V(H_{2})\ge 2$. Then $H_{2}$ contains a vertex, say $v$, that belongs to neither $C$ nor $S$. Consider the graph $H^{\prime}$ obtained by deleting all the edges in $H_{2}$ that are incident with vertex $v$, and reattaching each of these edges to the vertex $w$. Then, $H^{\prime}\in \mathcal{H}_{n,\kappa}^{\beta_0}$ and $H^{\prime}\ncong H$. Therefore, by Lemma \ref{zhengkel}, $\rho_{f}(H^{\prime})>\rho_{f}(H)$, a contradiction. Hence, $|V(H_{2})|=1$.\\[2mm]
\textbf{Claim 2}: $C\cap S=\emptyset$.\\
Assume contrary, suppose $x$ is a vertex common to both $C$ and $S$. Construct the graph $H^{\prime}$ by deleting all the edges that are incident with vertex $w$ and have the other end vertex in the set $S$, and then reattaching each of these edges to the vertex $x$. Note that, $deg_{H}(v)=deg_{H^{\prime}}(v)$ for all $v\in V(H)\backslash\{x,w\}$,  $deg_{H}(w)\le n-2$, $deg_{H}(x)\le n-2$, and $deg_{H}(x)= n-1$. So, the degree sequence of $H$ and $H^{\prime}$ are not same. Thus, $H^{\prime}\ncong H$. Therefore, by Lemma \ref{zhengkel}, $\rho_{f}(H^{\prime})>\rho_{f}(H)$, a contradiction.\\[2mm]
This completes the proof of the theorem.    
\end{proof}

\bibliography{name}
\bibliographystyle{abbrv}
\end{document}